\title{Toward a conjecture of Tan and Tu on fibered general type surfaces}
\author{A. Huitrado-Mora, M. Casta\~neda-Salazar and A. G. Zamora}
\address{Centro de Ciencias Matem\'aticas,\newline Universidad Nacional Aut\'onoma de M\'exico, Campus
Morelia.\newline  Apartado Postal 61-3, Santa Mar\'{\i}a, 58089.
\newline Morelia, Michoac\'an, M\'exico.} \email{huitrado at matmor.unam.mx}
\address{Centro de Ciencias
Matem\'aticas,\newline Universidad Nacional Aut\'onoma de
M\'exico, Campus Morelia.\newline  Apartado Postal 61-3, Santa
Mar\'{\i}a, 58089.\newline Morelia, Michoac\'an, M\'exico.}
\email{mcastaneda at matmor.unam.mx}
\address{U. A. Matem\'aticas, U.de
Zacatecas\newline Camino a la Bufa y Calzada Solidaridad, C.P.
98000\newline Zacatecas, Zac. M\'exico} \email {alexiszamora06 at
gmail.com}
\date{03/25/16}
\thanks{This paper was partially supported by CONACyT Grant 257079. First and second authors were supported by Conacyt
Doctoral Scholarships. Third author was partially supported by
Conacyt Grant 265621 for Academic Sabbatical Year}
\subjclass[2000]{14D06, 14J25} \keywords{Algebraic Surfaces,
Fibrations, Singular Fibres}
\begin{document}

\newtheorem{teo}{Theorem}[section]
\newtheorem{lem}[teo]{Lemma}
\newtheorem{defin}[teo]{Definition}
\newtheorem{prop}[teo]{Proposition}
\newtheorem{cor}[teo]{Corollary}
\newtheorem{remark}[teo]{Remark}

\begin{abstract} Given a semistable non-isotrivial fibered surface $f:X\to \mathbb{P}^1$
it was conjectured by Tan and Tu that if $X$ is of general type,
then $f$ admits at leats $7$ singular fibers. In this paper we
prove this conjecture in several particular cases, i.e. assuming
$f$ is obtained from blowing-up the base locus of a transversal
pencil on an exceptional minimal surface $S$ or assuming that $f$
is obtained as the blow-up of the base locus of a transversal and
adjoint pencil on a minimal surface.
\end{abstract}

\maketitle

\section{Notation}

We work on the complex field number $\mathbb{C}$. All the
considered varieties will be assumed irreducible and projective.
Through the paper we shall use the following notation:

\begin{itemize}

\item[.] $X$ will be a general type surface and $S$ its minimal
model. $\pi: X \to S$ will be the associated chain of
blowing-downs.

\item[.] $f: X \to \mathbb{P}^1$ will be a semi-stable,
non-isotrivial fibration, and $F$ the general fibre. We set $g=$
the genus of $F$. By $C$ we will denote the image of $F$ under
$\pi$ and

$$\Lambda: S \dashrightarrow \mathbb{P}^1,$$ the pencil induced
by $f$. We denote by $s$ the number of singular fibers.

\item[.] We shall say that $\Lambda$ is transversal if its general
member $C\in \Lambda$ is non-singular and intersects transversally
any other general member $C'\in \Lambda$.

\item[.] We'll freely use the standard notation in surfaces'
theory. In particular $q=h^1(X,\mathcal{O}_X)$ will be the
irregularity of $X$ and $p_g= h^0(X,K_X)$ its geometric genus. By
$e(X)$ will be denote the topological Euler characteristic.

\item[.] Given divisors $D_1$ and $D_2$ in an algebraic surface we
denote as usual $D_1\equiv D_2$ for the numeric equivalence and
$D_1\sim D_2$ for the linear one. Most of the time we will be
working on regular surfaces and in this case we indistinctly use
both symbols.

\item[.] The number $m$ will be:

$$m:= K_S^2- K_X^2= e(X)-e(S).$$

Note that in general $m\le C^2$. Adjunction Formula gives the
important inequality $C.K_S+m\le 2(g-1)$ with equality holding if
$\Lambda$ is transversal.

\end{itemize}

\section{Introduction}

Let $f:X\to \mathbb{P}^1$ be a non-isotrivial semistable fibered
surface. It is a classical result that such a fibration  admits a
certain number of singular fibers (in contrast to the case when
the base of the fibration is not rational or elliptic \cite{bpv}).
In the seminal paper \cite{beauv} it was proved that this number
$s$ must be at least $4$. Subsequently the bound have been
sharpened to $s\ge 5$ if $g\ge 2$ and $s\ge 6$ if the surface $X$
is not birationally ruled (\cite{liu} \cite{tan}, \cite{ttz}). It
was conjectured by Tan and Tu, in a preprint previous to
\cite{ttz} that this bound must raise to $7$ if $X$ is of general
type (Tan-Tu conjecture for what follows). They also proved the
conjecture for genus $2\le g \le 4$ and characterized fibrations
of genus $5$ with $s=6$ on a general type surface as those
obtained from blowing up the base locus of a transversal pencil on
a Horikawa surface. Using this characterization the proof of the
conjecture for $g=5$ was completed in \cite{zam}.

Roughly speaking the proof of these bounds are based, in case
$g\ge 2$ on the canonical class inequality:

$$ K_f^2 < 2(g-1)(2g_B-2+ s),$$
for any non iso-trivial semi-stable fibration $f:X \to B$ of genus
$g\ge 2$. Here $K_f$ is the relatively canonical divisor,
$K_f:=K_X-f^*K_B$, which turns out to be $K_X(-2F)$ if
$B=\mathbb{P}^1$. The bounds for $s$ are obtained from the
positivity  properties of $K_f$ and $K_f(-F)$.

Unfortunately, this approach is useless for proving Tan-Tu
conjecture, since in this case the only relevant information the
inequality provides is that $K_X^2< 0$ if $s=6$. However for most
of the cases a fibered surface (of general type or not) satisfies
$K_X^2< 0$. Indeed, such a surface is obtained by blowing up the
base locus of some pencil $\Lambda$ on a minimal surface $S$.

In this paper we deal with Tan-Tu conjecture in some particular
cases. We first impose on the minimal model $S$ of $X$ the
conditions of being exceptional in the sense that either
$K_S^2=2$, $p_g=3$ or $K_S^2=1$ and $p_g=2$ (see \cite{bpv},
VII.8) and assuming that $f$ is obtained as the blowing-up of the
base locus of a transversal pencil $\Lambda$ in $S$. In these
cases we are able to prove the conjecture using explicit
descriptions of these surfaces as double coverings of rational
surfaces, by means of the canonical or bi-canonical map. This is
the content of Section 3, Theorems \ref{K^2=2p=3} and
\ref{K^2=1p=2}. In this sense there is some hope of extending the
result to a wider class of surfaces.

Next, in section 5 we prove the conjecture assuming that the
pencil $\Lambda$ is adjoint, i.e. $C=B+K_S$ with $B$ a big and nef
divisor in $S$ and $K_S^2\ge 3$ and for $K_S^2\le 2$ assuming not
only that $\Lambda$ is adjoint, but also transversal (Theorem
\ref{Adjoint}).The case $K_S^2=p_g=1$ is the subtler and is stated
and proved in Proposition \ref{K^2=1p=1}.

We list below the cases in which Tan-Tu conjecture have been
proved in this article:

\begin{itemize}

\item[.] If $\chi(\mathcal{O}_S)=1$ with no extra assumption on $S$
or $\Lambda$.

\item[.] If either $K_S^2=1$, $p_g=2$ or $K_S^2=2$, $p_g=3$
assuming $\Lambda$ is transversal.

\item[.] If $\Lambda$ is  transversal and adjoint and $K_S^2\le 2$ or if $\Lambda$ is merely adjoint and $K_S^2\ge 3$.

\end{itemize}

\section{Some general facts and results}

The following inequality will be systematically used: given a
semi-stable non-isotrivial  fibration of genus $g\ge 2$, $f:X\to
B$, for any integer $e\ge 2$:

$$\frac{1}{3}e^2(K_X^2-2 (g-1)(6(g_B-1)+s-s/e))\le e_f.$$

Original formulation involves the number of $(-2)$ vertical curves
in $X$, but for our purposes this version will be sufficient. The
proof can be founded in \cite{tan} and is based on successive
changes of the base $B$ of the fibration. We call this Tan's
inequality. In particular if $B= \mathbb{P}^1$ (our interest's
case) and $s=6$ we obtain:

\begin{equation} \label{TI} \frac{1}{3}e(K_X^2e+12(g-1))\leq
e_f.\end{equation}

Useful forms of this inequality are collected in the following:

\begin{lem}\label{TanIneq} Let $f:X\to \mathbb{P}^1$ be semistable, non-isotrivial of genus $g\ge 2$. If $s=6$, then evaluating (\ref{TI}) we obtain:

\begin{itemize}

\item[i)] $$K_S^2+C.K_S\le 3\chi (\mathcal{O}_S) \text{ if }
e=3,$$

\item[ii)] $$19K_S^2+18C.K_S \le m +36\chi(\mathcal{O}_S) \text{
if } e=4,$$

\item[iii)] $$7K_S^2+6C.K_S\le m + 9\chi(\mathcal{O}_S) \text{
if } e=5.$$

\end{itemize}

\end{lem}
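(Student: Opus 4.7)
The proof is a direct substitution: all three inequalities should follow from (\ref{TI}) by rewriting $e_f$ and $K_X^2$ in terms of invariants of the minimal model $S$, using adjunction to eliminate $g$, and then specializing $e$. The plan is to derive a single master inequality that already has $K_S^2$, $C.K_S$, $m$ and $\chi(\mathcal{O}_S)$ as its ingredients, and observe that plugging in $e=3,4,5$ yields exactly i), ii) and iii).

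First I would translate the right-hand side of (\ref{TI}). Because $B=\mathbb{P}^1$ and $F$ has genus $g$, one has $e_f=e(X)+4(g-1)$; combining with Noether's formula $e(X)=12\chi(\mathcal{O}_X)-K_X^2$ and $\chi(\mathcal{O}_X)=\chi(\mathcal{O}_S)$ gives
\begin{equation*}
e_f=12\chi(\mathcal{O}_S)-K_X^2+4(g-1).
\end{equation*}
Plugging into (\ref{TI}) and clearing denominators, the inequality becomes
\begin{equation*}
(e^2+3)K_X^2+4(e-1)\cdot 3(g-1)\le 36\chi(\mathcal{O}_S),
\end{equation*}
after absorbing the $4(g-1)$ term from $e_f$ into the $12(g-1)$ coming from (\ref{TI}).

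Next I would replace $K_X^2$ and $2(g-1)$ by their expressions on $S$. The definition $m=K_S^2-K_X^2$ gives $K_X^2=K_S^2-m$, and the adjunction inequality $C.K_S+m\le 2(g-1)$ recorded in the Notation section lets me bound $2(e-1)\cdot 2(g-1)\ge 2(e-1)(C.K_S+m)$ (which is in the right direction because this quantity appears with a minus sign after one moves it to the left). Doing both substitutions cleanly collapses the master inequality into
\begin{equation*}
(e^2+3)K_S^2+6(e-1)\,C.K_S-(e-3)^2\,m\;\le\;36\chi(\mathcal{O}_S),
\end{equation*}
where the coefficient of $m$ comes out to $6(e-1)-(e^2+3)=-(e-3)^2$.

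Finally I would simply evaluate this inequality at the three values of $e$. For $e=3$ the $m$-term disappears and division by $12$ gives i); for $e=4$ the coefficient of $m$ is $-1$ and one recovers ii) verbatim; for $e=5$ the coefficient is $-4$ and division by $4$ produces iii). The only non-routine input is the adjunction inequality $C.K_S+m\le 2(g-1)$, which is explicitly quoted in the Notation, so no real obstacle arises; the "hard part" is simply bookkeeping the signs so that the adjunction inequality is applied in the correct direction after the substitution.
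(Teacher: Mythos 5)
Your proposal is correct and follows essentially the same route as the paper: evaluate (\ref{TI}), substitute $e_f=e(X)+4(g-1)$, Noether's formula, $K_X^2=K_S^2-m$, and the adjunction bound $C.K_S+m\le 2(g-1)$ (applied in the valid direction, since the $(g-1)$-term has positive coefficient $12(e-1)$), the only cosmetic difference being that you package the computation into a single master inequality $(e^2+3)K_S^2+6(e-1)C.K_S-(e-3)^2m\le 36\chi(\mathcal{O}_S)$ before specializing $e=3,4,5$. The coefficient check $6(e-1)-(e^2+3)=-(e-3)^2$ is right and reproduces i)--iii) exactly; only the intermediate factor ``$2(e-1)\cdot 2(g-1)$'' should read $6(e-1)\cdot 2(g-1)$, a harmless slip.
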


\begin{proof} Evaluate (\ref{TI}) at the indicated value of $e$
and substitute:

$$m+C.K_S\le 2(g-1),$$

$$e(X)= 12\chi(\mathcal{O}_X)-K_X^2 \text{ (Noether's Formula)},$$

$$K_X^2=K_S^2-m,$$
and

$$e_f= 4(g-1) + e(X) \text{ (because $f$ is semistable) }.$$
\end{proof}

We start by sharpening the bound for $m$ obtained in \cite{ttz},
inequality (3.2) (compare with the proof of Theorem 2.1(4) in
\cite{ttz}).

\begin{lem}\label{Cotam} Assume $f:X\to \mathbb{P}^1$ is a semistable fibration and the minimal model $S$ of $X$ is a general
type surface, then:

 $$m\leq C^2\leq \frac{4(g-1)+K_S^2-\sqrt{8(g-1)K_S^2+ (K_S^2)^2}}{2}.$$
\end{lem}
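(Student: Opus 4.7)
My plan is to combine the Hodge Index Theorem on $S$ with the two inequalities recorded in the notation section: the adjunction estimate $C.K_S+m\le 2(g-1)$ and the bound $m\le C^2$. Since $S$ is minimal of general type, $K_S$ is big and nef, so $K_S^2>0$ and $C.K_S\ge 0$; Hodge applied to the pair $(C,K_S)$ then gives $(C.K_S)^2\ge C^2\cdot K_S^2$, equivalently $C.K_S\ge \sqrt{C^2\cdot K_S^2}$. Chaining this with the adjunction bound produces
\[
\sqrt{C^2\cdot K_S^2}+m\le 2(g-1).
\]
The left half of the inequality in the lemma, $m\le C^2$, is already recorded in the notation section (it comes from the blow-up resolution of the base locus of $\Lambda$), so what remains is the upper bound on $C^2$.

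Using $m\le C^2$ I would substitute $C^2$ for $m$ in the displayed chain to obtain
\[
C^2+\sqrt{C^2\cdot K_S^2}\le 2(g-1).
\]
Setting $u=\sqrt{C^2}$, this is the quadratic inequality $u^2+u\sqrt{K_S^2}\le 2(g-1)$, whose admissible range is $u\le \tfrac12\bigl(-\sqrt{K_S^2}+\sqrt{K_S^2+8(g-1)}\bigr)$. Squaring and simplifying yields exactly the claimed upper bound for $C^2$, namely $\tfrac12\bigl(4(g-1)+K_S^2-\sqrt{8(g-1)K_S^2+(K_S^2)^2}\bigr)$.

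The main obstacle is the substitution of $C^2$ for $m$: a priori $m\le C^2$ goes the wrong way for a direct replacement. This step is clean when $\Lambda$ is transversal, since then adjunction is an equality and automatically $m=C^2$, so the substitution is trivially valid. In general the slack $C^2-m$ (produced by the non-simple base points of $\Lambda$) and the slack in $C.K_S+m\le 2(g-1)$ must be matched against one another, which can be done via $p_a(C)\ge g$, i.e.\ $C^2+C.K_S\ge 2(g-1)$; I expect this balancing argument to be the subtlest point of the proof.
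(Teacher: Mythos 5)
The first half of your argument coincides with the paper's (Hodge Index for the pair $(C,K_S)$ together with the adjunction estimate $C.K_S+m\le 2(g-1)$), but the step you yourself flag is a genuine gap, and the repair you sketch cannot close it. You need $C^2+\sqrt{C^2K_S^2}\le 2(g-1)$, i.e.\ an upper bound for $C^2$ against $2(g-1)$; but $p_a(C)\ge g$ reads $C^2+C.K_S\ge 2(g-1)$, and combined with Hodge ($\sqrt{C^2K_S^2}\le C.K_S$) it only says that both $C^2+\sqrt{C^2K_S^2}$ and $2(g-1)$ are dominated by $C^2+C.K_S$ --- it never compares them to each other. Worse, the inequality you are aiming at is not merely out of reach but can fail outside the transversal case: writing the general fibre as $F=\pi^*C-\sum m_iE_i$ with $m_i\ge 1$ the multiplicities of $C$ at the centers of the blow-ups, one has $2(g-1)=C.K_S+\sum m_i$ while $C^2=\sum m_i^2$; so if some $m_i\ge 2$ (the general member of $\Lambda$ singular at a base point) and $C$ is numerically proportional to $K_S$ (Hodge an equality), then $C^2+\sqrt{C^2K_S^2}=C^2+C.K_S>2(g-1)$. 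Hence no balancing of the two slacks can rescue the bound for $C^2$ in general.

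The paper sidesteps this by bounding $m$ rather than $C^2$: since $mK_S^2\le C^2K_S^2\le (C.K_S)^2$ and $C.K_S\le 2(g-1)-m$ both have $m$ on the ``small'' side, they chain to $mK_S^2\le (2(g-1)-m)^2$, i.e.\ $m^2-(4(g-1)+K_S^2)m+4(g-1)^2\ge 0$, and the root analysis gives $m\le \frac{4(g-1)+K_S^2-\sqrt{8(g-1)K_S^2+(K_S^2)^2}}{2}$, the larger-root branch being excluded because it would force $C.K_S\le 2(g-1)-m<0$. Note that this, like your argument, establishes the displayed upper bound only for $m$, not for $C^2$, except when $\Lambda$ is transversal (then $m=C^2$, $2(g-1)=C^2+C.K_S$, and your one-branch quadratic in $u=\sqrt{C^2}$ is a clean way to finish); and indeed only the bound on $m$ is used later, in Proposition \ref{CotaGenus} and Theorem \ref{Adjoint}. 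So the concrete fix for your write-up is to run your quadratic argument with $m$ throughout in place of $C^2$.
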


\begin{proof}
From Index Hodge Theorem applied to $K_S$ and $C$ we get:

$$ mK_S^2\le C^2K_S^2\leq (C.K_S)^2.$$

Adjoint formula  gives $C.K_S\le 2(g-1)-m,$ therefore
$$0\le m^2-(4(g-1)+K_S^2)m+4(g-1)^2. \label{I1}$$

Consider the right hand term of the previous inequality as a
polynomial in $m$. Its discriminant turns out to be

$$\Delta=8(g-1)K_S^2 + (K_S^2)^2.$$

Thus its roots are:
$$m_{\pm }=\frac{4(g-1)+K_S^2\pm\sqrt{\Delta}}{2}.$$
It follows that either $m\le m_{-}$ or $m\ge m_{+}$. We claim that
$m\le m_{-}$ is the only possible case. Indeed, if

$$ \frac{4(g-1)+K_S^2+\sqrt{\Delta}}{2} \le m,$$
then $C.K_S\le 2(g-1)-m\leq 0$ that is impossible.
\end{proof}

As a consequence of Lemma \ref{Cotam} we obtain our first general
fact concerning the number $s$:

\begin{prop}\label{CotaGenus}
Let $S$ be of general type. If $s=6$, then
$$K_S^2+\sqrt{8(g-1)K_S^2+(K_S^2)^2}\leq 6\chi(\mathcal{O}_S).$$
In particular, if $g\geq 6$ and $\chi(\mathcal{O}_S)=1$, then
$s\geq 7.$
\end{prop}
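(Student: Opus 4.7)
The plan is to sandwich $m$ between matching lower and upper bounds. Lemma~\ref{Cotam} already supplies the upper bound, while Tan's inequality at $e=3$, $s=6$ produces a lower bound on $m$; putting the two together rearranges directly to the desired inequality.

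First I would evaluate (\ref{TI}) at $e=3$. Using the semi-stability relation $e_f = 4(g-1) + e(X)$ and Noether's formula $e(X) = 12\chi(\mathcal{O}_S) - K_X^2$, it collapses to $K_X^2 + 2(g-1) \le 3\chi(\mathcal{O}_S)$. Substituting $K_X^2 = K_S^2 - m$ then yields the lower bound
$$m \ge K_S^2 + 2(g-1) - 3\chi(\mathcal{O}_S).$$
This is precisely the calculation that produces Lemma~\ref{TanIneq}(i), stopped one step earlier: the final Adjunction substitution $2(g-1) - m \ge C.K_S$ would be counterproductive here because it discards the information about $m$ that we need.

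Combining this lower bound with the upper bound $m \le \bigl(4(g-1) + K_S^2 - \sqrt{\Delta}\bigr)/2$ supplied by Lemma~\ref{Cotam}, where $\Delta = 8(g-1)K_S^2 + (K_S^2)^2$, and multiplying through by $2$, the terms $4(g-1)$ and one copy of $K_S^2$ cancel, leaving
$$K_S^2 + \sqrt{\Delta} \le 6\chi(\mathcal{O}_S),$$
which is the first assertion.

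For the in-particular clause, set $\chi(\mathcal{O}_S) = 1$; since $S$ is minimal of general type, $K_S^2 \ge 1$, so for $g \ge 6$ one has $\sqrt{\Delta} \ge \sqrt{40+1} > 6$, violating the inequality just established. Thus $s=6$ cannot occur, and combined with the already-known bound $s \ge 6$ for surfaces that are not birationally ruled, we conclude $s \ge 7$. The only conceptually delicate point is the first step: Lemma~\ref{TanIneq}(i) as stated only bounds $C.K_S$ from above and provides no direct handle on $m$, so one must return to the intermediate form of Tan's inequality in order to extract a usable lower bound on $m$.
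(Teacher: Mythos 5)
Your proposal is correct and follows essentially the same route as the paper: Tan's inequality (\ref{TI}) at $e=3$, rewritten via semistability, Noether's formula and $m=K_S^2-K_X^2$ to give the lower bound $K_S^2+2(g-1)-3\chi(\mathcal{O}_S)\le m$, which combined with the upper bound of Lemma~\ref{Cotam} yields the stated inequality. Your closing observation (that one must use this intermediate form rather than Lemma~\ref{TanIneq}(i)) and the explicit check $K_S^2\ge 1$, $\sqrt{8\cdot 5\cdot K_S^2+(K_S^2)^2}>6$ for the in-particular clause are consistent with, and slightly more detailed than, the paper's own argument.
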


\begin{proof}
Assume $s=6$, by Tan's inequality:

$$\frac{1}{3}e(K_X^2e+12(g-1))\leq e_f,$$
for any natural number $e\ge 2$. Evaluating in $e=3$  we obtain:
\begin{equation}\label{ec1}
3K_X^2+12(g-1)\leq e_f.
\end{equation}

Since $f$ is semistable $e(X)=-4(g-1)+e_f$ and (\ref{ec1})
becomes:

\begin{equation}\label{ec2}
3K_X^2+12(g-1)\leq e_f=e(X)+4(g-1).
\end{equation}
By definition $m=K_S^2- K_X^2=e(X)-e(S)$. This, combined with
Noether Formula $e(S)=12\chi(\mathcal{O}_S)-K_S^2$ leads to:

$$K_S^2+ 2(g-1)\leq 3\chi(\mathcal{O}_S) + m.$$

The desired inequality follows after applying Lemma \ref{Cotam}.
\end{proof}

The importance of this Proposition is that given a family of
general type surfaces with given invariants $K_S^2$ and
$\chi(\mathcal{O}_S)$ only a finite numbers of values of $g$ must
be discharged in order to conclude that a fibered surface
birational to $S$ has at least $7$ singular fibers. This principle
will be illustrated in the next section.

Note that in Proposition \ref{CotaGenus} the hypothesis of being
$S$ of general type is essential (\cite{beauv}, Example 2).

\section{Fibrations obtained from exceptional surfaces}

Surfaces satisfying either $K_S^2=2$ and $p_g=3$ or $K_S^2=1$ and
$p_g=2$ are called exceptional because of the behavior of the
tri-canonical map (Theorem VII 8.3 in \cite{bpv}). In this section
we study fibrations in these surfaces by means of the canonical
and bi-canonical map, respectively.

\begin{teo}\label{K^2=2p=3} Assume $S$ satisfies $K_S^2=2$ and
$p_g=3$. Let $f:X\to \mathbb{P}^1$ be obtained as the blowing-up
of a transversal pencil $\Lambda$ in $S$. Then, $s\ge 7$.\end{teo}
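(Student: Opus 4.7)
My plan is by contradiction, assuming $s=6$ (the lower bound $s\ge 6$ is already known in the general-type case). By Horikawa's classification (\cite{bpv}, Theorem VII.8.3), $q(S)=0$, so $\chi(\mathcal O_S)=4$, and the canonical map realizes $\pi:S\to\mathbb P^2$ as a degree-two morphism branched along a reduced octic $B$, with $K_S=\pi^*H$. Proposition~\ref{CotaGenus} gives the preliminary bound $g\le 31$, which I would sharpen via Lemma~\ref{TanIneq}(ii): since transversality gives $m=C^2$ and the Hodge Index inequality yields $2C^2\le t^2$ (with $t:=C.K_S$), substitution produces $t^2-36t+212\ge 0$, which combined with $t\le 10$ from part~(i) forces $t\le 7$, and hence $g\le 16$.

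Next I would analyze the general fibre $C\in\Lambda$ through $\pi$. Since $\pi(C)\subset\mathbb P^2$ has degree at most $7$, Bezout forces $\pi(C)\cap B\ne\varnothing$, and for $C$ in general position this makes $\pi^{-1}\pi(C)$ irreducible; hence $C$ is fixed by the covering involution $\sigma$ and $C=\pi^*D$ for an irreducible plane curve $D$ of degree $d$. Thus $\Lambda=\pi^*\Lambda'$ for a pencil $\Lambda'$ of degree-$d$ plane curves on $\mathbb P^2$. The relation $t=2d$ restricts $d\in\{1,2,3\}$ and $g=d^2+d+1\in\{3,7,13\}$; the case $g=3$ is already covered by the known result of Tan and Tu for $2\le g\le 4$, so I would focus on $d\in\{2,3\}$.

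To conclude I would count the singular fibres of $f$ directly. A fibre $\pi^*D\in\Lambda$ is singular exactly when $D\in\Lambda'$ is itself singular or fails to meet $B$ transversally. A generic pencil of plane curves of degree $d$ contains $3(d-1)^2$ singular members, and the number of members tangent to $B$ follows from a Riemann--Hurwitz computation on the incidence curve $\{(p,D):p\in D\cap B\}\to\mathbb P^1_{\Lambda'}$ (which has degree $8d$), giving $2g(B)-2+16d$ tangent members, where $g(B)=21$ for a smooth octic. Even after discounting multiplicities, for both $d=2$ and $d=3$ the total number of singular fibres of $f$ vastly exceeds $7$, producing the required contradiction to $s=6$.

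The main obstacle is to make these counts fully rigorous in the non-generic situations allowed by the hypothesis: when $\mathrm{Pic}(S)$ strictly contains $\pi^*\mathrm{Pic}(\mathbb P^2)$, so that the general $C$ may fail to be $\sigma$-invariant and a more delicate description of $\Lambda$ is required; when the branch octic $B$ has the simple singularities admitted by Horikawa's classification; or when $\Lambda'$ is in special position with respect to $B$ or to the discriminant of $|\mathcal O_{\mathbb P^2}(d)|$. In each of these degenerate scenarios one must verify separately that $f$ still admits at least $7$ distinct semistable singular fibres.
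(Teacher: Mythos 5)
Your setup (double cover $\phi_{K_S}\colon S\to\mathbb{P}^2$ branched in a degree $8$ curve, $q=0$, $\chi(\mathcal{O}_S)=4$, genus bound from Proposition \ref{CotaGenus}, and the sharpening $C.K_S\le 7$ via Lemma \ref{TanIneq} combined with Hodge Index) is sound and close to the paper's frame, but the two steps that carry the actual proof both have genuine gaps. First, the reduction to $C=\phi_{K_S}^*D$ is not justified. ``$C$ in general position'' means general \emph{in the pencil} $\Lambda$, not general in $S$ or in $|dH|$-pullbacks, and Bezout only gives that the image meets the branch octic --- it can meet it everywhere with even multiplicity, in which case the restricted double cover splits and $\phi_{K_S}^*G=C+C'$ with $\phi|_C$ birational. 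This split case is not a degenerate nuisance: in the paper it is Case 2 and consumes most of the proof, producing the values $g-1=5,9,14,20,27$, of which $g-1=5,9,14$ survive even your improved bound $C.K_S\le 7$; the case $g-1=14$ requires a genuinely delicate argument (a hypothetical fiber forced to map to seven lines, analysis of the curves $C_i$, $C_i'$ with $C_i^2=-3$, construction of the pencil $|C_1+\cdots+C_4|$, and a final contradiction of linear equivalences $C\sim 3K_S+D'$). You list this only as an ``obstacle,'' but it is the core of the theorem, not a loose end.

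Second, even in the pulled-back case your concluding count does not bound $s$. The quantities $3(d-1)^2$ (degree of the discriminant of the pencil $\Lambda'$) and $2g(B)-2+16d$ (Riemann--Hurwitz on the incidence curve) count singular, respectively tangent, members \emph{with multiplicity}, while $s$ counts distinct singular fibers; nothing in your argument prevents all of that multiplicity from concentrating in six very degenerate members, which is exactly the scenario one must exclude. The phrase ``even after discounting multiplicities\ldots vastly exceeds $7$'' is the statement to be proved, not a consequence. The paper resolves precisely this point by a per-fiber argument: it computes $e_f$ exactly (Noether's formula plus semistability), deduces that if $s=6$ some fiber $F_0$ carries at least $e_f/6$ nodes, hence at least a certain number of rational components, and then contradicts this with the degree of $\phi_{K_S}(\pi(F_0))$ in $\mathbb{P}^2$ (a degree $2$ or $3$ plane curve cannot support that many irreducible components). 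Without an argument of this kind --- and without treating the split case and the possibility that the branch octic has the simple singularities allowed by Horikawa, which you also defer --- the proposal does not establish the theorem.
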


\begin{proof} By Debarre's Inequality (\cite{deb}) we know that $q=0$ and
therefore $\chi(\mathcal{O}_S)=4$. Assume $s=6$, by Proposition
\ref{CotaGenus} it is sufficient to consider:

$$6\le g \le 31.$$

In this case the canonical map $\phi_{K_S}$ defines a $2:1$
covering:

$$\phi_{K_S} : S \to \mathbb{P}^2,$$ ramified along $R\equiv
4K_S$ (\cite{hor1}). Consider the restriction $\phi :=
\phi_{K_S}\vert_C.$

First, we consider the case $\phi$ is a $2:1$ covering. Denote by
$G\subset \mathbb{P}^2$ the image of $C$ under $\phi$ and $d$ for
its degree. We have $C=\phi^*G$ and therefore:

$$m=C^2=(\phi^*G)^2=2G^2=2d^2.$$

On the other hand, taking into account that $\phi^* H=K_S$, with
$H$ a hyperplane section (i.e. the divisor associated with
$\mathcal{O}_{\mathbb{P}^2}(1)$) we obtain:

$$d= \frac{C.K_S}{2} \text{ and } m=\frac{(C.K_S)^2}{2}.$$

From this we get $2m=(2(g-1)-m)^2$. The possible values of $g$
satisfying such a relation in the range $6\le g \le 31$ are: $g=7,
13, 21, 31$. The corresponding values of $m$ and $C.K_S$ are
listed below:

\begin{center} \begin{tabular} {|c|c|c|} \hline $g-1$ & $C.K_S$ & $m$ \\ \hline 6 & 4 & 8 \\ \hline 12 & 6 & 18 \\ \hline 20
& 8 & 32 \\ \hline 30 & 10 & 50 \\ \hline \end{tabular}
\end{center}

If $g-1=20$ or $30$ we use the inequalities of Lemma \ref{TanIneq}
ii), in order to obtain a contradiction.

Assume $g-1=6$. In this case the number of singular points in the
fibers of $f$ will be:

$$\begin{aligned} e_f &= e(X)+4(g-1)\\ &=-K_X^2+ 12\chi(\mathcal{O}_X)+ 4(g-1) \\
&=6+48+24=78.\end{aligned}$$

Since $s=6$ there exists at least a singular fiber $F_0$ of $f$
containing $\sigma_0= 13=78/6$ singular points. But then, denoting
$F_0=F_1+...+F_l$ for the decomposition into irreducible
components:

$$\begin{aligned} 6=(g-1)&= \sum_{i=1}^l(g_i-1)+ \sigma_0 \\ & \ge \sum (g_i-1)+ 13,\end{aligned}$$
with $g_i$ standing for the geometric genus of $F_i$ and
$\sigma_0$ for the number of singular points of $F_0$. We have:

$$\sum_{i=1}^l (g_i-1)\le -7.$$

In particular, $F_0$ have at least $7$ irreducible rational
components. Being $\phi$ a $2:1$ covering and $C$ be applied under
$\phi$ to a plane curve $G$ of degree $d=C.K_S/2=2$ we have that
the number $l$ of irreducible components of $F_0$ is at most $4$.
In this way we get a contradiction with the assumption $s=6$.

The case $g-1=12$ follows after similar considerations, this time
tanking into account that the covering $\phi$ sends $C$ onto a
curve $G$ of degree $3$.

Consider now the case $\phi$ restricted to $C$ is $1:1$. In this
case there exists a curve $C'$ such that

$$C+C'=\phi^*G.$$

Moreover, since the ramifications of $\phi$ over $C$ occurs
exactly on the intersections of $C$ and $C'$ we have
$C.C'=C.R=C.4K_S$. Similarly, we conclude that $C.K_S=C'.K_S$.
Also, we have that:

$$2C.K_S=\phi^* G.K_S=2d,$$ i.e. $d=C.K_S$.

It follows that $\phi^*G=\phi^*((C.K_S)H)=(C.K_S)K_S$. From

$$C+C'=(C.K_S)K_S,$$ we obtain, after intersecting with $C$:

$$4K_S.C= C.C'=(C.K_S)^2-m.$$

The possibilities for such a relationship are listed below:

\begin{center} \begin{tabular} {|c|c|c|}\hline $(g-1)$ & $C.K_S$ & $m$ \\ \hline 5 & 5 & 5 \\ \hline 9 & 6 & 12
\\ \hline 14 & 7 & 21 \\ \hline 20 & 8 & 32 \\ \hline 27 & 9 & 45
\\ \hline
\end{tabular} \end{center}

Using Lemma \ref{TanIneq} we obtain a contradiction in the
following cases: $g-1=20$ and $27$ when evaluating at $e=4$, and
$g-1=5, 9$ when evaluating at $e=5$.

Finally, we must analyze the case $g-1=14$. In this case,

$$\begin{aligned} e_f &= 4(g-1)-K_X^2+12\chi(\mathcal{O}_X) \\ &
=123. \end{aligned}$$

Assuming $s=6$ there must to exist a singular fiber $F_0$ with its
number of nodes $\sigma_0\ge 21$. Assume $F_0=F_1+...+ F_l$ is its
decomposition into irreducible components. Denoting by $g_i$ the
geometric genus of $F_i$ we have:

$$14=\sum_{i=1}^l (g_i-1)+\sigma_0 \ge \sum_{i=1}^l(g_i-1) +21.$$

From this we get $l\ge 7$. Now, the image of $F_0$ in
$\mathbb{P}^2$ (under the composition $\phi\circ \pi$) is a degree
$7$ curve $G_0$. It follows that $l=7$, $G_0=L_1+..+L_7$ is the
sum of seven lines $L_i$,  and $C_0:= \pi(F_0)$ must be the sum of
seven irreducible components $C_0=C_1+...+ C_7$ ( $C_i=\pi(F_i)$).
We have, moreover, $\phi^*L_i=C_i+ C'_i\equiv K_S$,
$C_i^2={C'_i}^2=-3$ and $C_iC'_i=4$.

A simple cohomological computation shows that
$h^0(\mathcal{O}_S(C_{i_1}+C_{i_2}+C_{i_3})=1$ for any indexes
$i_1,i_2,i_3\in \{1,...,7\}$ and
$h^0(\mathcal{O}_S(C_1+...+C_4))=2$. This, together with
$(C_1+...+C_4)^2=0$ means that $\vert C_1+...+C_4\vert$ is a base
point free pencil. Call $\Delta:= C_1+...+C_4$. We have,
$C'_i.\Delta=0$ for $i=5,6,7$, thus $C'_5+C'_6+C'_7$, being
connected, must be a vertical divisor with respect to $\vert
\Delta \vert$. We conclude the existence of an effective divisor
$D'$ such that $\Delta \sim C'_5+C'_6+C'_7+D'$. It is easy to
deduce that $D'$ is a rational $(-3)-$curve and $\phi(D')$ is a
line in $\mathbb{P}^2$.

From this relation and

$$(C_1+C'_1)+...+(C_7+C'_7)\sim 7K_S,$$
it follows that:

$$C\sim C_1+...+C_7 \sim 3K_S+ D'.$$

This gives a contradiction, as the image $\phi(C)$ is a degree $7$
curve and the image $\phi(3K_S+D')$ is a degree $4$ curve.

\end{proof}

\begin{teo}\label{K^2=1p=2}
\label{teo3} Assume $f:X\rightarrow\mathbb{P}^1$ is obtained as a
blow up of a transversal pencil $\Lambda$ on a minimal surface $S$
with  $K_S^2=1$ and $p_g=2$. Then $s\geq7$.
\end{teo}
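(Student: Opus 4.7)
The plan is to mirror the strategy of Theorem \ref{K^2=2p=3}, with the bicanonical map replacing the canonical one (since for $(K_S^2,p_g)=(1,2)$ the canonical system is only a pencil). Debarre's inequality forces $q=0$ (otherwise $K_S^2\ge 2p_g=4$), so $\chi(\mathcal{O}_S)=3$. Assuming $s=6$, Proposition \ref{CotaGenus} yields $1+\sqrt{8(g-1)+1}\le 18$, hence $g\le 37$, reducing to finitely many genera to be discharged.

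For these surfaces $h^0(2K_S)=\chi(\mathcal{O}_S)+K_S^2=4$, and by Horikawa's classification the bicanonical map $\phi=\phi_{|2K_S|}\colon S\to Q\subset\mathbb{P}^3$ realizes $S$ as a double cover of the quadric cone $Q\cong\mathbb{P}(1,1,2)$, with $\phi^*H=2K_S$ and branch divisor in $|\mathcal{O}_Q(10)|$; hence the ramification divisor on $S$ satisfies $R\equiv 5K_S$. As in Theorem \ref{K^2=2p=3} I would analyze $\phi\vert_C$. If it is $2{:}1$ onto its image $G\subset Q$, then $C=\phi^*G$, so $C\cdot K_S=\deg G=:d$ and $m=C^2=2\,G_Q^2$. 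If it is birational onto $G$, there is a companion curve $C'$ numerically equivalent to $C$ with $C+C'=\phi^*G$, giving $2\,C\cdot K_S=d$ and $C\cdot C'=C\cdot R=5\,C\cdot K_S$; expanding $(C+C')^2=2G_Q^2$ then yields $G_Q^2=10(g-1)-4m$. In either case, the adjoint relation $C\cdot K_S+m=2(g-1)$, combined with the constraint that the pullback of $G$ to the minimal resolution $\mathbb{F}_2\to Q$ is effective of class $\alpha\sigma+d f$ with $G_Q^2=2\alpha(d-\alpha)$, restricts the triple $(g,m,C\cdot K_S)$ to a short explicit list.

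Lemma \ref{TanIneq} evaluated at $e=3,4,5$ should eliminate most entries in that list; for instance, $e=3$ immediately gives $C\cdot K_S\le 8$, which severely restricts $\deg G$. The remaining exceptional genera I would rule out by the node-counting argument already employed in Theorem \ref{K^2=2p=3}: with $s=6$ some singular fiber $F_0$ must carry at least $e_f/6=(4(g-1)+m+35)/6$ nodes, which forces $F_0$ to decompose into many rational components $F_0=F_1+\cdots+F_l$; the number $l$ is then bounded above by the degree of $\phi(\pi(F_0))$ on $Q$, yielding the contradiction.

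The main obstacle is the final step in the birational subcases, paralleling the $g-1=14$ subcase in Theorem \ref{K^2=2p=3}. There the argument was carried out in the smooth ambient $\mathbb{P}^2$ via explicit $h^0$ computations on sums $C_{i_1}+\cdots+C_{i_k}$ to pin down the numerical class of $C$. Here the target $Q$ is a singular cone, so the same computations must be lifted to $\mathbb{F}_2$ and performed while tracking intersections with the $(-2)$-curve $\sigma$, which complicates both the description of the $C_i$ and of any auxiliary pencil they may span.
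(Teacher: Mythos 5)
Your overall strategy (bicanonical double cover of the quadric cone, case split on $\deg\phi\vert_C\in\{1,2\}$, numerical tables, Tan's inequality, node counting) is the same as the paper's, but three steps you rely on do not work as stated. First, in the birational case you assert that the residual curve $C'$ with $C+C'=\phi^*G$ is numerically equivalent to $C$, and your relation $G_Q^2=10(g-1)-4m$ uses $C'^2=C^2$; nothing gives this (the covering involution need not act trivially on $\mathrm{NS}(S)$). The paper only extracts $C\cdot K_S=C'\cdot K_S$ and $\bar C\cdot\bar C'=\bar C\cdot R$, and gets its key relation by intersecting $\phi^*G$ with $\bar C$ (yielding $m=C\cdot K_S(2C\cdot K_S-5)$, resp.\ $m=2(C\cdot K_S)^2-5C\cdot K_S-2$), never squaring $C+C'$. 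Second, you lift the target to $\mathbb{F}_2$ but not the source: the vertex of $Q$ is the image of the base point $p$ of $\vert K_S\vert$, so the paper blows up $p$, works with $\phi_2:\bar S\to\mathbb{F}_2$, and must distinguish $p\in\Lambda$ from $p\notin\Lambda$, since $\bar C=\Pi^*C$ or $\Pi^*C-E$ and the numerics change (e.g.\ your uniform formula $m=2G_Q^2$ in the $2{:}1$ case is already off by one when $p\in\Lambda$, where $m-1=\bar C^2$). The tables, and which rows die by Tan, by Hodge Index, or by node counting, depend on this dichotomy, which your setup cannot see.

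Third, and most seriously, your closing step bounds the number of components of $F_0$ by the degree of $\phi(\pi(F_0))$ on $Q$. That bound fails: rulings of the cone have degree $1$ and classes containing $\Delta$ are contracted to the vertex, so a low-degree image class such as $2\Delta+4\Gamma$ could a priori split into six rational pieces ($2\Delta+4\Gamma$ as two copies of $\Delta$ plus four rulings), and no contradiction results. What makes the count work in the paper is Lemma \ref{lemmaphi}: no component of $G_0$ can be $\Delta$, $\Gamma$, or $\Delta+2\Gamma$ (using semistability of $f$, the absence of curves with $K_S\cdot D=0$ meeting the argument, and $1$-connectedness of $2K_S$), so the number of rational components of $G_0$ is at most the coefficient $a$ of $\Delta$, which is what kills every surviving row of the tables. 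You would need to supply this lemma or a substitute. Finally, the obstacle you defer (an analogue of the $g-1=14$ analysis of Theorem \ref{K^2=2p=3}) never actually arises: with the correct relations and the subcase split, Hodge Index plus Lemma \ref{lemmaphi} already dispose of all remaining genera, so the missing ingredient is not the hard $h^0$ computation you anticipate but the component-exclusion lemma and the $p\in\Lambda$ bookkeeping you omitted.
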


\begin{proof}
By Proposition~\ref{CotaGenus} we must consider only the values
$6\leq g\leq37.$

Considering the classical Horikawa's construction and notation
(\cite{hor}), let $\Pi: \bar S \to S$ be the blowing up centered
in the base point $p$ of $|K_S|$, denote by $E$ its exceptional
divisor and consider the ramified double covering:

$$\phi_2:\bar S \rightarrow\mathbb{F}_2.$$

The map $\phi_2$ is given as follow: the bicanonical map of $S$
determines a double cover on the singular quadric $Q\subset
\mathbb{P}^3$, the singular point being the image of $p$.
$\phi_{2}$ is the induced map on $\bar S$ after considering the
desingularization $\mathbb{F}_2$ of the quadric. The locus branch
of $\phi_2$ is the divisor $B=6\Delta+10\Gamma$, with $\Delta$ and
$\Gamma$ denoting respectively the class of the (-2)-section and
the class of the fiber in $\mathbb{F}_2$ of the structural
morphism and the ramification divisor $R$ is $5K_S+E$. Here and in
what follows given $D$ any divisor in $S$ we just write $D$ for
the divisor  $\Pi^* D$ in $\bar S$.

Denote by $\bar \Lambda$ the induced pencil on $\bar S$. Depending
on whether $p$ is a base point of $\Lambda$ or not we have

\begin{equation}
\label{Cbar} \bar C =\left\{
\begin{array}{lrr}
\Pi^*C & \textrm{if} & p\not\in\Lambda \\
\Pi^*C-E & \textrm{if} & p\in\Lambda .
\end{array}
\right.
\end{equation}
Let $G$ be the image of $\bar {C}$ under $\phi_2$. If we denote
$G=a\Delta+b\Gamma$ and considering that $\phi^*\Delta=2E$,
$\phi^*\Gamma= K_S-E$ we have

\begin{equation}
\label{G*} \phi^*G=b K_S+(2a-b)E.
\end{equation}

Let be $\phi:=\phi_2\vert_{\bar C}:\bar C\rightarrow G$ with $\deg
\phi=n=1$ or $2$.

We analyze the two cases $n=1$ or $2$ and within each one the
subcases $p\in \Lambda$ or not.

{\emph{Case 1}} $\phi$ is 2:1. In this case $\phi^*G=\bar C$.

Assume first that $p\not\in\Lambda$.

By (\ref{Cbar}) and (\ref{G*}) we have that $2a-b=0$, using this
$\bar C^2=m=b^2$. On the other hand $m=bC.K_S$, therefore
$b=C.K_S$. The next table shows the possible values of $m$,
$C.K_S$ and $g-1$.

\begin{center}
\begin{tabular}{|c|c|c|c|}\hline
$C.K_S$  & $a$ & $m$ & $g-1$ \\
\hline 4 & 2 & 16 & 10 \\ \hline 6 & 3 & 36 & 21 \\ \hline 8 & 4 &
64 & 36 \\ \hline
\end{tabular}
\end{center}

Assume $g-1=10$. In this case the number of singular points in the
fibers of $f$ will be:

\begin{eqnarray}
e_f & =12\chi(\mathcal{O}_X)-K_X^2+4(g-1) \nonumber \\
& = 36+15+4(10)=91 \nonumber
\end{eqnarray}

Since $s=6$ there exists at least a singular fiber $F_0$ of $f$
containing 16 singular points. Let $F_0=F_1+\ldots+F_l$ be the
decomposition into irreducible components, then:

\begin{eqnarray}
10=g-1 & \geq\sum_{i=1}^l(g_i-1)+\sigma_0 \nonumber \\
 & \geq\sum(g_i-1)+16
\end{eqnarray}
where $g_i$ denotes the geometric genus of $F_i$. This imply that
$l\geq6$. Moreover there are at least 6 of this components that
are rational curves and are mapping onto rational components of
$G_0$.

From \cite{hart} (Corollary V.5.18) we know that the possible
irreducible curves in $\mathbb{F}_2$ are: $\Gamma$, $\Delta$ and
$\alpha\Delta+\beta\Gamma$ with $\alpha>0$, $\beta\geq2a$. Let
$G_0=G_1+\ldots+G_s$ be the decomposition into irreducible
components.

Note that even being $\Delta$ a rational curve, is not a
possibility for any of the $G_i$'s, that because if $G_1=\Delta$
then
$$\bar C_1=\phi^*G_0=2E$$
that contradicts the semistability of $f$.

With respect to the components $G_i=a_i\Delta +b_i\Gamma$,
$a_i>0$, $b_i\geq 2a$,  $G_i$ is rational if and only if $a_i=1$
and $b_i\geq2$. Since $a=\sum_{i=1}^sa_i$ and $b=\sum_{i=1}^sb_i$
the only possible decomposition is $G_0=G_1+G_2$ with $G_i=\Delta
+2\Gamma$, $i=1,2$, i.e. we can't have the $6$ needed rational
components.

If $g-1=21,36$ we use similar arguments with $\sigma_0=26,41$
respectively. In both cases there exist at least 5 rational
components and this is not possible because $a < 5$.

Now consider the case $p\in\Lambda$.

By (\ref{Cbar}) and (\ref{G*}) $b=2a+1$, so $m-1=\bar C^2=4a(a+1)$
and $m=b^2$. Moreover,
$$\bar C.K_{\bar S}=C.K_S+1=b+1$$
Therefore $m=(C.K_S)^2$. Keeping in mind the previous notation, we
get the next possible values
\begin{center}
\begin{tabular}{|c|c|c|c|c|c|c|}\hline
$C.K_S$  & $a$ & $m$ & $g-1$ & $e_f$ & $\sigma_0$ & $l\geq$ \\
\hline 3 & 1 & 9 & 6 & 68 & 12 & 6 \\ \hline 5 & 2 & 25 & 15 & 120
& 20 & 5 \\ \hline 7 & 3 & 49 & 28 & 196 & 33 & 5 \\ \hline
\end{tabular}
\end{center}

Observe that $l$ is, as before, the minimal number of rational
components in $G_0$, so analogous to the case $p\not\in\Lambda$,
all possibilities in the table can't occur because of $a<l$.

{\emph{Case 2}} $\phi$ is 1:1. In this case there exists a divisor
$\bar{C}'$ such that $\phi^*G=\bar{C}+\bar{C}'$.

Denote as before by $F_0$ a singular fiber of $f$, $C_0$ its image
under $\pi$. If $F_0=F_1+...+F_k$ is the decomposition of $F_0$
into irreducible components, we denote by $C_0=C_1+...+C_k$ the
corresponding decomposition for $C_0$ and by $\bar C_0=\bar
C_1+...+\bar C_k$ the corresponding curves and decomposition in
$\bar S$ and by $G_0=G_1+...+G_k$ their images in $\mathbb{F}_2$.

We begin by stating the following:

\begin{lem}\label{lemmaphi} In the previous situation, let $G_1$ be any irreducible
component of $G_0$, then neither $G_1 \sim \Delta$, nor $G_1\sim
\Gamma$ nor $G_1\sim \Delta + 2\Gamma$. In particular, if $G_0\sim
a\Delta + b \Gamma$, then the number of rational irreducible
components of $G_0$ is least or equal than $a$.
\end{lem}

\begin{proof}

Let $G_1$ be equivalent to $\Delta$. Then $\phi_2^*(G_1)\sim 2E$.
Note that there exists a divisor $\bar C_1'$ such that $\bar
C_1+\bar C_1'=\phi_2^*(G_1)$. This implies $\bar C_1=E$, which is
impossible by the definition of $\bar C$.

Now, assume $G_1\equiv \Gamma$, then $\phi^*G_1\equiv K_S-E=C_1+
C'_1$. From this, intersecting with $K_S$, and using that $K_S$ is
nef we obtain that either $C_1.K_S=0$ or $K_S.C'_1=0$. It follows
that there exists a curve $D$ on $S$ with $K_S.D=0$, which
contradicts that ampleness of $\vert K_S \vert$.

Finally, suppose that $G_1=\Delta + 2\Gamma$, in this case $\phi^*
G_1\equiv 2K_S$.

Note that for any decomposition $2K_S= A+A'$ we must have that
both, $A$ and $A'$ must be irreducible and equivalent to $K_S$ and
$A.A'=1$. Indeed, assume $A$ irreducible, that from $2K_S=A+A'$
with easy it follows that $A.K_S=A'.K_S=1$. Therefore, $A^2=1$,
because $2K_S$ is $1-$connected, $A\equiv K_S$ follows from HIT.

One stablished this fact, just note that if $2K_S=C_1+ C'_1$, then
$\phi: C_1\to G_1$ can not be $1:1$, because $C_1$ is a genus $2$
curve.

The last assertion follows from the fact that the only irreducible
rational curves on $\mathbb{F}_2$ are equivalent to either
$\Delta$, $\Gamma$ or $\Delta+ b\Gamma$ with $b\ge 2$.
\end{proof}

Continuing the proof of the Theorem, assume first that
$p\not\in\Lambda$. We have the commutative diagram:
$$\xymatrix{\ar[d]_\Pi\overline{S} \ar[r]^{\phi_2}  & \mathbb{F}_2\ar[d]\\
 S \ar[r]_{\phi_{2K}} & Q},$$
therefore $\phi_{2K}(\Pi(\bar C'))$ contains the singular point of
$Q$ and from this it follows that  $\bar C'=\Pi^*C'$, for some
effective divisor $C'$ in $S$. By (\ref{G*}) $b=2a$ and $\bar C
+\bar C'\sim bK_S$. Therefore,

 $$C.K_S + C'.K_S = (\bar C +\bar C')K_{\bar S} = (bK_S)(K_S + E) = b.$$

The ramifications of $\phi_2$ occurring on $\bar C$ are given by
the intersections of $\bar C$ and $\bar C'$, so we have $\bar
C.\bar C' = \bar C.R = \bar C'.R$. In particular, the right hand
term of the previous equation implies that $C.K_S = C'.K_S$ and $b
= 2C.K_S$.

Moreover,

$$bC.K_S=\phi^*G.\bar C=(\bar C +\bar C').\bar C=m+\bar C.\bar C'=m+5C.K_S.$$

We conclude that  $m=C.K_S(2C.K_S-5)$ and we get the possible
values (keeping in mind the previous notation introduced for
$\sigma_0$ and $l$):

\begin{center}
\begin{tabular}{|l|c|c|c|c|}\hline
$b$  & $C.K_S$ & $m$ & $g-1$  \\
\hline 8 & 4 & 12 & 8  \\ \hline 10 & 5 & 25 & 15
\\ \hline 12 & 6 & 42 & 24  \\ \hline 14 & 7 & 63 &
35  \\ \hline
\end{tabular}
\end{center}
The values $g-1=24$, $35$ are impossible because of the Hodge
Index Theorem.

If $g-1 = 8$ we have that $e_f = 79$ and then there must exists a
singular fiber $F_0$ with at least $14$ singular points. It
follows that $G_0$ has $6$ or more rational components. Using that
$a=4$ and Lemma \ref{lemmaphi}, we get a contradiction. The case
$g-1=15$ follows after similar considerations.

It remains to analyze the case $p\in\Lambda$.

As in the previous case, like $\bar C=C-E$ also $\bar C'=C'-E$.
From this we get $2a-b=-2$. Therefore we have:

$$\phi_2^* G= C+C'-2E= bK_S- 2E.$$
Moreover, $\bar C.R=\bar C'.R$ and then $C.K_S+1=\bar C.K_{\bar
S}=\bar C'.K_{\bar S}$

We get the next formulas

$$\phi_2^*G.K_{\bar S}=b+2=2C.K_S+2$$
and therefore
$$m+5C.K_S=\phi_2^*G.\bar C=(bK_S-2E).(C-E)=bC.K_S-2$$

We conclude that $b=2C.K_S$ and  $m=2(C.K_S)^2-5C.K_S -2$. The
table of possible values is:
\begin{center}
\begin{tabular}{|l|c|c|c|c|}\hline
$C.K_S$  & $b$ & $a$ & $m$ & $g-1$   \\
\hline 4 & 8 & 3 & 10 & 7  \\ \hline 5 & 10 & 4 & 23 &
14 \\ \hline 6 & 12 & 5 & 40 & 23  \\
\hline 7 & 14 & 6 & 61 & 34 \\ \hline
\end{tabular}
\end{center}
If $g-1=23, 34$ we get a contradiction by Hodge Index Theorem. If
$g-1 = 7$, then $e_f = 73$,  therefore there exists a singular
fiber $F_0$ with  at least $13$ singular points and at least $6$
rational components. Taking in consideration that $a=3$ and Lemma
\ref{lemmaphi} we obtain a contradiction. The case $g-1=14$ is
similar.
\end{proof}

\section{The adjoint case}

In this section we consider fibrations $f: X \to \mathbb{P}^1$
satisfying the property that $C$ is an adjoint linear system,
i.e., $C\equiv B+K_S$ with $B$ a big and nef divisor. The typical
example for bearing in mind is $C\equiv nK_S$, i.e. the fibration
$f$ is obtained after blowing up the base locus of a generic
pencil of curves $\Lambda \subset \vert n K_S \vert$.

We collect, for further use, some general elemental facts in the
following:

\begin{lem}\label{AdjointProperties} Assume $C\equiv B+K_S$ with $B$ a big and nef divisor, denoting by
$g_B$ the arithmetic genus of $B$, we have:

\begin{itemize}

\item[i)]
$$ 2(g_B-1)=B^2+B.K_S.$$

\item[ii)]
$$ m=(g_B-1)+(g-1).$$

\item[iii)]
$$(g-1)=(g_B-1)+B.K_S+K_S^2.$$

\item[iv)] $2\leq B.K_S,$ and  if $g\geq 5$, then

$$g+1\leq m.$$
\end{itemize} \end{lem}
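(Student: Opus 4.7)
Parts (i)--(iii) are adjunction bookkeeping. For (i), I would apply adjunction to the big and nef divisor $B$ on the smooth surface $S$ to get $2(g_B-1) = B^2 + B\cdot K_S$. For (iii), start from adjunction on the smooth general fibre class, $2(g-1) = C^2 + C\cdot K_S$; expanding $C = B+K_S$ gives $C^2 + C\cdot K_S = B^2 + 3B\cdot K_S + 2K_S^2$, and eliminating $B^2$ via (i) yields $2(g-1) = 2(g_B-1) + 2B\cdot K_S + 2K_S^2$, which is (iii). For (ii), rewrite $C^2 = (B+K_S)^2 = [B^2 + B\cdot K_S] + [B\cdot K_S + K_S^2] = 2(g_B-1) + C\cdot K_S$ and combine with $C^2 + C\cdot K_S = 2(g-1)$ to obtain $C^2 = (g_B-1) + (g-1)$; the transversality of $\Lambda$ (equivalently, equality in $m + C\cdot K_S \le 2(g-1)$) gives $m = C^2$, proving (ii).

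The substantive content is (iv), for which I would invoke the Hodge Index Theorem applied to the pair $(B, K_S)$. Since $B$ is big and nef and $S$ is minimal of general type, $B^2 \ge 1$ and $K_S^2 \ge 1$, so $(B\cdot K_S)^2 \ge B^2 K_S^2 \ge 1$, hence $B\cdot K_S \ge 1$. If equality $B\cdot K_S = 1$ held, the equality case of the Hodge Index Theorem would force $B^2 K_S^2 = 1$, hence $B^2 = K_S^2 = 1$ and $B \equiv K_S$; then (i) gives $g_B = 2$ and (iii) gives $g = 4$, which is excluded under $g \ge 5$. This establishes $B\cdot K_S \ge 2$.

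For the inequality $g + 1 \le m$, use (ii) to rewrite the claim as $g_B \ge 3$ and argue by contradiction assuming $g_B \le 2$. From (iii) one gets $B\cdot K_S = g - g_B - K_S^2$, and then from (i) $B^2 = 3(g_B-1) - (g-1) + K_S^2$; imposing $B^2 \ge 1$ together with $B\cdot K_S \ge 1$ pins $K_S^2$ to the interval $[g - 3(g_B-1),\, g - g_B - 1]$, which is non-empty only when $g_B = 2$, and then forces $K_S^2 = g-3$ together with $B\cdot K_S = B^2 = 1$. Substituting into the Hodge Index Theorem gives $1 \ge g - 3$, i.e.\ $g \le 4$, again contradicting $g \ge 5$. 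The main obstacle is precisely this tight HIT-and-integrality bookkeeping in the second half of (iv); parts (i)--(iii) are routine once one expands the squared binomials.
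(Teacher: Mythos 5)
Your proof is correct and follows essentially the same route as the paper: (i)--(iii) by adjunction (with the same implicit use of $m=C^2$ that the paper makes), and the Hodge Index Theorem to exclude $B\cdot K_S=1$ when $g\ge 5$. The only difference is that your second half of (iv) is an unnecessary detour: once $B\cdot K_S\ge 2$ is known, the paper simply observes $2(g_B-1)=B^2+B\cdot K_S\ge 3$, hence $g_B-1\ge 2$ by integrality, and (ii) immediately gives $m\ge g+1$.
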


\begin{proof} Assertions i)-iii) follow immediately from
adjunction formula. As for iv), note that it is enough to prove
that $2\leq B.K_S,$ because then using i) $g_B-1\geq 2$ and the
desired inequality follows from ii). Now, if $B.K_S=1,$ by Index
Hodge Theorem $B^2=K_S^2=1$ and applying from $m=B^2+2B.K_S
+K_S^2$ we have $m=4.$ On the other hand, from i) and ii)
$m=1+(g-1)=g\geq 5$.
\end{proof}

We start by studding  the case $K_S^2=1$, which  is similar in
nature to Theorems \ref{K^2=2p=3} and \ref{K^2=1p=2}:

\begin{prop}\label{K^2=1p=1} Let $f:X\to \mathbb{P}^1$ be a fibration obtained as
the blowing up of the base locus of a transversal and adjoint
pencil $\Lambda$ on a minimal surface with $K_S^2=1$. Then, $s\ge
7$. \end{prop}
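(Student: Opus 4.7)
Assume $s=6$ for contradiction. A first reduction narrows the space of surfaces: the case $\chi(\mathcal{O}_S)=1$ is already dispatched by the first bullet of the introduction; Debarre's inequality forces $q=0$; Noether's inequality forces $p_g\le 2$; and the case $p_g=2$ follows from Theorem~\ref{K^2=1p=2} since our pencil is transversal. Only $p_g=1$, $\chi(\mathcal{O}_S)=2$ remains, and Proposition~\ref{CotaGenus} then bounds $g\le 16$.

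Using Lemma~\ref{AdjointProperties} together with the transversality identity $C\cdot K_S+m=2(g-1)$, all invariants are fully determined by the pair $(g,g_B)$: explicitly, $C\cdot K_S=g-g_B$, $m=g+g_B-2$, $B\cdot K_S=g-g_B-1\ge 2$, and $B^2=3g_B-g-1$. I would then prune the $(g,g_B)$ candidates by three filters: Tan's inequalities of Lemma~\ref{TanIneq} for $e=3,4,5$; the Hodge Index inequality $B^2\le (B\cdot K_S)^2$; and the positivity $B^2\ge 1$ arising from $B$ big and nef. A short direct computation leaves only a handful of extreme candidates clustered near the equality loci: one type in which the Hodge Index bound is attained (so $B$ is numerically proportional to $K_S$ and $C\equiv nK_S$ for small $n$) and one in which Tan's inequality for $e=5$ is attained.

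For each surviving case, the closing argument parallels those of Theorems~\ref{K^2=2p=3} and~\ref{K^2=1p=2}: compute $e_f=4(g-1)+12\chi(\mathcal{O}_S)-K_X^2$; by pigeonhole, some singular fiber $F_0$ has $\sigma_0\ge\lceil e_f/6\rceil$ nodes, and the identity $g-1=\sum(g_i-1)+\sigma_0$ produces a lower bound on the number of rational irreducible components of $F_0$. Pushing down to $C_0=\pi(F_0)\subset S$, each irreducible component $C_i$ is either a $(-2)$-curve or contributes $K_S\cdot C_i\ge 1$ to the fixed total $K_S\cdot C_0=C\cdot K_S$. Since $C\cdot K_S$ is small in every surviving case, the number of non-$(-2)$ components is severely bounded, contradicting the rational-component count produced by $\sigma_0$ whenever $S$ contains no $(-2)$-curves.

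The main obstacle is exactly the opposite possibility: $S$ may carry $(-2)$-curves, and these can absorb rational components of $C_0$ without contributing to $C_0\cdot K_S$, so the na\"{\i}ve count does not immediately close. To deal with the Hodge-equality case $C\equiv nK_S$, I would exploit the connectedness of the semistable fiber $F_0$ and the rigidity of $(-2)$-curve configurations (which form ADE Dynkin diagrams on a minimal general type surface) to write $C_0$ explicitly as a sum involving these rigid components plus a controlled remainder whose intersection profile is incompatible with $C_0\equiv nK_S$, in the spirit of the last step of the proof of Theorem~\ref{K^2=2p=3}. This is the delicate step; the remaining numerics are routine.
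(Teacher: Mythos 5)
Your reduction and numerical pruning are correct, and they form a genuinely different route from the paper's: with $p_g=1$, $\chi(\mathcal{O}_S)=2$, $K_S^2=1$, the constraints $B\cdot K_S\ge 2$, Tan's inequalities at $e=3,4,5$, transversality ($m=C^2$) and Hodge Index ($C^2\le (C\cdot K_S)^2$) indeed leave only $C\cdot K_S=3$ with $(g,m)=(6,7)$ (equality in Tan at $e=5$) or $(g,m)=(7,9)$ (Hodge equality, i.e.\ $C\equiv 3K_S$); I checked this. The paper instead exploits the structure of these surfaces: the bicanonical map is a $4:1$ cover $\phi_{2K_S}:S\to\mathbb{P}^2$ ramified along $R\equiv 7K_S$, and the proof runs through the degree $n\in\{1,2,4\}$ of $\phi_{2K_S}|_C$ and the degree $d$ of the image plane curve.

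The genuine gap is exactly the step you yourself flag as ``delicate,'' and it is not a routine remainder: it is where all the geometric content lies. In both surviving cases the pigeonhole gives $\sigma_0\ge 9$ (resp.\ $\ge 10$), hence at least $4$ rational components of $C_0$, against $C_0\cdot K_S=3$; the contradiction therefore requires excluding $(-2)$-curve components, and surfaces with $K_S^2=1$, $p_g=1$ do carry $(-2)$-curves in general, so the ``no $(-2)$-curves'' branch does not suffice. Pure intersection-theoretic bookkeeping does not rule out, say, $C_0=D_1+D_2+D_3+A_1+\cdots$ with the $D_i$ rational of $K_S\cdot D_i=1$ and the $A_j$ $(-2)$-curves: the identities $C_0^2=m$, $p_a(C_0)=g$ and the node count can all be met by such configurations, including in the case $C_0\equiv 3K_S$. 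Your proposed fix (rigidity of ADE configurations plus connectedness forcing an ``intersection profile incompatible with $C_0\equiv nK_S$'') is a statement of intent with no derived incompatibility, and it is unclear how to obtain one without extra geometric input. In the paper that input comes precisely from the covering structure you discarded: the image $G_0=\phi_{2K_S}(C_0)$ is a plane curve of controlled degree, which bounds its nodes and rational components, and Bombieri's lemma (any decomposition $2K_S\equiv C_i+C_i'$ forces $C_i=\Delta$, the unique canonical curve, of geometric genus $2$) eliminates the residual configurations. Without that machinery or an adequate substitute, your argument does not close either of the two remaining cases.
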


\begin{proof} By Noether Inequality $p_g\le 2$, and by Proposition \ref{CotaGenus} and Theorem \ref{K^2=1p=2}, we can assume $p_g=1$.
It is well known that for a surface with such invariants the
bicanonical map $\phi_{2K_S}$ defines a $4:1$ morphism onto
$\mathbb{P}^2$ (\cite{cat}, \cite{kun}, \cite{tod}):

$$\phi_{2K_S}: S \to \mathbb{P}^2,$$ ramified along a divisor $R\equiv 7K_S$. We consider, as before, the restriction of this map to $C$:

$$
 \xymatrix{
C \ar[r]^{n:1}\ar[d]_{\phi} &\tilde G\ar@{->}[dl]^{j}\\
G &}.
$$

In this case $n$ is a divisor of $4$, $G\subset \mathbb{P}^2$ is
the image of $C$ and $j$ denotes its normalization. Denote by $d$
the degree of $G$.

If we assume $s=6$ we only need, according to Proposition
\ref{CotaGenus} to consider $6\le g \le 16$.

We start by analyzing the case $n=4$: in this case we have
$C=\phi^*G$ and therefore:
$$m=C^2=(\phi^*G)^2=4G^2=4d^2.$$
On the other hand, taking in account that $\phi^*H\equiv 2K_S,$
with $H$ hyperplane section (i.e the divisor associated with
$\mathcal{O}_{\mathbb{P}^2}(1)$) we obtain:
$$C.K_S=\phi^*G.K_S=\phi^*(dH).K_S=2d.$$
Adjunction formula gives $2(g-1)=2d(2d+1).$ The only value  of $g$
that satisfies the relation in the range $6\leq g\leq 16$ is
$g-1=10$ with $d=2$ and $m=16$. Evaluating in Tan's inequality for
$e=4$ (Lemma \ref{TanIneq} ii))we obtain a contradiction.

If $n<4$, then there exists an effective divisor $C'>0$ such that:

$$C+C'\equiv \phi_{2K_S}^* G \equiv 2dK_S.$$

Note that:

$$C'.K_S=2d-C.K_S, $$ and

$$C'^2=4d^2-4dC.K_S+C^2.$$

Moreover, since $h^1(C)=h^2(C)=0$,

$$h^0(C)=\frac{C^2-C.K_S}{2}+2.$$

Next, the only possibility for $h^2(C')=h^0(K_S-C')\ne 0$ is
$C'\equiv K_S$, because $p_g=1$. This would imply $C\equiv K_s$,
which is impossible.

It is easy to prove that $H^0(C)\simeq H^0(C')$ and by
Riemann-Roch we get:

$$h^0(C)=h^0(C')\ge \frac{C'^2-C'.K_S}{2}+2,$$
and substituting the values of $C'^2$ and $C'.K_S$:

$$h^0(C)\ge h^0(C)+ \frac{ 4d^2- (4d-2) C.K_S-2d}{2}.$$
This implies,

$$2d^2-(2d-1)C.K_S -d \le 0,$$
that is equivalent to $d\le C.K_S$.

Now, assume $n=1$. The linear system $\vert 2K_S\vert \mid_C$
defines a base point free linear system on $C$ and the associated
map a $1:1$ cover onto a plane degree $d$ curve. Thus, we have
$d=2C.K_S$ and we obtain a contradiction with the just obtained
bound $d\le C.K_S$.

The case $n=2$ remains to be analyzed: in this case we have
$C.K_S=d.$ Note that the intersections of $C$ and $C'$ gives place
to ramifications points of $\phi_{2K_S}$. Therefore:

$$2dC.K_S-m=C.C'\le C.R=7C.K_S.$$

From this we get:

$$d(2d-7)\le m\le  2d^2-2.$$

It follows that $d\le 5$. In general we have that

\begin{equation} \begin{aligned} e_f & = e(X)+4(g-1) \\ &= 24 -
(1-m) + 4(g-1),\\ & = 23 - d + 6(g-1). \end{aligned}
\end{equation}

Thus, assuming $s=6$ and $d\le 3$, there must exits a singular
fiber $F_0$ of $f$ having at least $(g-1)+ 4$ nodes.  Call
$\sigma_0$ the number of nodes of $F_0$. Note that the number of
nodes of $C_0=\pi(F_0)$ is also $\sigma_0$. Then, $\sigma_0\ge 9$.
On the other hand, the plane curve $G_0= \phi_{2K_S} (C_0)$, being
of degree $d\le 3$ admits at most $3$ nodes. In this way we get
the contradiction $\sigma_0\le 6$.

Similar argumentations lead to contradictions for the cases
$d=4,5$. Indeed, if $d=4$, then $\sigma_0\ge g+3$. We have:

$$g-1=\sum_{i=1}^l (g_i-1) + \sigma_0\ge -l+ g+3,$$
with $g_i$ standing for the geometric genus of the components
$F_i$ of $F_0$. It follows that $l\ge 4$ and therefore $F_0$ (and
in consequence $C_0$) has at least $4$ rational components. From
this it follows that $G_0$ has at least $2$ irreducible rational
components. Taking in account that $G_0$ is a degree $4$ curve we
have that, either $G_0$ contains a line as an irreducible
component or it is the product of two irreducible conics. If $G_0$
is the product of two irreducible conics then it has only $4$
nodes and we get, as before a contradiction, in any other case, if
$L$ is an irreducible component of $G_0$ and $C_0=C_1+...+C_l$,
then
$$\phi_{2K_S}^* L= C_i+C_i',$$ for $C_i$ some rational components
of $C_0$. But then:

$$2K_S \equiv C_i+C_i',$$ and it follows (\cite{bom} Lemma 1, page $181$) that
$C_i=\Delta$, the only effective divisor in $\vert K_S \vert$.
This give a contradiction, since $\Delta$ is a curve of geometric
genus $2$.

Finally the case $d=5$ follows after similar considerations. In
this case $C_0$ admits at least $3$ irreducible rational
components and $G_0$ at least $2$ irreducible rational components.
The only subtle case to be treated careful being the possibility
that $G_0=Q+E$, with $Q$ an irreducible conic and $E$ a singular
irreducible cubic. But in this case either $Q$ or $E$ must
satisfies that it pull back under $\phi_{2K_S}$ is the sum of two
irreducible components $C_i+C_j$ of $C_0$.

Note that $\phi_{2K_S}^*E=C_i+C_j$ is impossible, because then

$$6K_S\equiv C_i+C_j$$ and then $K_S.(C_i+C_j)=6$, that
contradicts $K_S.C_0=5$. On the other hand

$$4K_S=\phi_{2K_S}^*Q=C_i+C_j$$ implies that $C_0$ has exactly $3$
irreducible components: $C_0=C_1+C_2+C_3$ . Suppose $i=1$ and
$j=2$, then $C_3.K_S=1$ and

$$\phi_{2K_S}: C_3 \to E,$$ must be a $2:1$ map onto a degree cubic and we obtain the contradiction

$$2K_S.C_3=6.$$
\end{proof}

Finally we have:

\begin{teo}\label{Adjoint} Let $f:X\to \mathbb{P}^1$ a semi-stable non-isotrivial
fibration obtained as the blow-up of the base locus of an adjoint
pencil $\Lambda$ on the minimal surface $S$. Then if $K_S^2\ge 3$
the number $s$ of singular fibers of $f$ is at least $7$.If
$K_S^2\le 2$ and $\Lambda$ is also transversal, then $s\ge
7$.\end{teo}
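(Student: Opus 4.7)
The plan is to argue by contradiction, assuming $s = 6$. First, I would dispose of the low $K_S^2$ cases: $K_S^2 = 1$ is exactly Proposition \ref{K^2=1p=1}, and for $K_S^2 = 2$ with $\Lambda$ transversal and adjoint, Debarre's inequality forces $q = 0$ and Noether's gives $p_g \leq 3$, hence $\chi(\mathcal{O}_S) \leq 4$; in the borderline subcase $\chi(\mathcal{O}_S) = 4$ one has $p_g = 3$, so $S$ is exceptional and Theorem \ref{K^2=2p=3} applies. This leaves the residual cases $K_S^2 \geq 3$ (no transversality required) and $K_S^2 = 2$ with $\chi(\mathcal{O}_S) \leq 3$ (with transversality).

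For these residual cases, the strategy is to combine Lemma \ref{TanIneq} (i) and (iii) with the Hodge Index Theorem applied to the big and nef divisor $B \equiv C - K_S$. Writing $C.K_S = B.K_S + K_S^2$, Lemma \ref{TanIneq} (i) gives
\begin{equation*}
B.K_S \leq 3\chi(\mathcal{O}_S) - 2 K_S^2,
\end{equation*}
while Lemma \ref{TanIneq} (iii), combined with the general bound $m \leq C^2 = B^2 + 2 B.K_S + K_S^2$, yields $B^2 \geq 12 K_S^2 + 4 B.K_S - 9\chi(\mathcal{O}_S)$. Using HIT in the form $B^2 \leq (B.K_S)^2 / K_S^2$, one obtains the quadratic inequality
\begin{equation*}
(B.K_S)^2 - 4 K_S^2 (B.K_S) + 9 K_S^2 \chi(\mathcal{O}_S) - 12 K_S^4 \geq 0,
\end{equation*}
whose roots are $x_{\pm} = 2 K_S^2 \pm K_S\sqrt{16 K_S^2 - 9\chi(\mathcal{O}_S)}$.

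The final step is to verify that $[2,\, 3\chi(\mathcal{O}_S) - 2 K_S^2] \subset (x_-, x_+)$, which then forces the value $B.K_S$---constrained to the left interval by Lemma \ref{TanIneq} (i) and by $B.K_S \geq 2$ from Lemma \ref{AdjointProperties} (iv)---to lie strictly inside $(x_-, x_+)$, violating the displayed quadratic inequality. The three numerical conditions (positivity of the discriminant $\chi < 16 K_S^2 / 9$; $x_- < 2$, equivalent to $\chi < (12 K_S^4 + 8 K_S^2 - 4)/(9 K_S^2)$; and $x_+ > 3\chi - 2 K_S^2$, equivalent to $\chi < 5 K_S^2 / 3$) all follow from Noether's inequality $\chi(\mathcal{O}_S) \leq (K_S^2 + 6)/2$ for general type surfaces whenever $K_S^2 \geq 3$, and a direct check settles the remaining $K_S^2 = 2$, $\chi \leq 3$ situation. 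The hard part will be precisely this clean simultaneous verification of the three numerical estimates; notice that the discriminant becomes non-positive exactly in the borderline case $K_S^2 = 2$, $\chi(\mathcal{O}_S) = 4$, which is why the uniform argument breaks down there and why Theorem \ref{K^2=2p=3} is needed to absorb that case.
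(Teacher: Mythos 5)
Your proposal is correct and reaches the same conclusion by the same overall strategy (assume $s=6$, derive a numerical contradiction from Tan's inequality, the Hodge Index Theorem and Noether's inequality, and absorb the low cases via Proposition \ref{K^2=1p=1} for $K_S^2=1$ and Theorem \ref{K^2=2p=3} for $K_S^2=2$, $\chi(\mathcal{O}_S)=4$), but the execution is genuinely different from the paper's. The paper first combines Proposition \ref{CotaGenus} with Noether's inequality to bound $m$, then applies Lemma \ref{TanIneq}~ii) twice together with HIT applied to $C$ and the bound $2\le B.K_S$ to force $K_S^2\le 4$, and finally disposes of $K_S^2=2,3,4$ by an iterative squeeze alternating Tan's inequality at $e=4,5$ with HIT; transversality enters only through Theorem \ref{K^2=2p=3} in the $K_S^2=2$ case, exactly as in your reduction. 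You instead use Lemma \ref{TanIneq}~i) to bound $B.K_S$ from above by $3\chi(\mathcal{O}_S)-2K_S^2$, Lemma \ref{TanIneq}~iii) together with $m\le C^2$ to bound $B^2$ from below, and HIT applied to $B$ (rather than $C$) to obtain a single quadratic inequality in $B.K_S$; checking that $[2,\,3\chi-2K_S^2]$ lies strictly between its roots, which I verified does follow from $\chi\le (K_S^2+6)/2$ for $K_S^2\ge 3$ and from $\chi\le 3$ when $K_S^2=2$ (the key inequality being $15K_S^4-38K_S^2-8>0$ for $K_S^2\ge3$), gives the contradiction uniformly, with no intermediate reduction to $K_S^2\le4$ and no case-by-case iteration. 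What your route buys is uniformity and a transparent explanation of why the argument must break precisely at $K_S^2=2$, $\chi=4$ (the discriminant $16K_S^2-9\chi$ changes sign there), whereas the paper's route is computationally more elementary but case-heavy. One small remark: both your argument and the paper's invoke Lemma \ref{AdjointProperties}~iv) for $B.K_S\ge2$; in the range $K_S^2\ge2$ this also follows directly from HIT (if $B.K_S=1$ then $B^2K_S^2\le1$ forces $K_S^2=1$), so your reliance on it is harmless, and in fact your root estimates would go through with the weaker bound $B.K_S\ge1$.
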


\begin{proof}

We assume $s=6$ and $2\le K_S^2$. From Noether's inequality:
$$p_g\le \frac{K_S^2}{2}+2,$$ we have:

$$\chi (\mathcal{O}_S)\le \frac{K_S^2}{2}+3.$$

Applying Lemma \ref{CotaGenus}:

$$K_S^2+ \sqrt{8K_S^2(g-1)+(K_S^2)^2}\le 3K_S^2+18,$$
which implies

$$8K_S^2(g-1)+(K_S^2)^2\le 4(K_S^2)^2+72K_S^2+18^2.$$

Substituting $2(g-1)\ge C.K_S+m= B.K_S+K_S^2+m,$ we get:

$$m\le \frac{72-4B.K_S}{4} + \frac{18^2-(K_S^2)^2}{4K_S^2}.$$

Now, combine the previous bound for $m$ with Lemma \ref{TanIneq}
ii), in order to deduce:

$$19K_S^2+19B.K_S\le 18 + \frac{18^2-(K_S^2)^2}{4K_S^2} +108.$$

Using again the adjoint hypothesis, this amount to:

$$\frac{(K_S^2)}{4} + 19C.K_S\le 166,$$
that is,

$$C.K_S \le 8.$$

Now, use Hodge Index Theorem:

$$m\le \frac{(C.K_S)^2}{K_S^2}\le \frac{64}{K_S^2},$$
and apply one more time Lemma \ref{TanIneq} ii):

$$K_S^2+18 C.K_S\le \frac{64}{K_S^2} +108,$$

$$19K_S^2+18B.K_S \le \frac{64}{K_S^2} +108.$$

Finally, using $2\le B.K_S,$ (Lemma \ref{AdjointProperties} iv))
we arrive to:

$$19K_S^2 \le  \frac{64}{K_S^2} +72.$$

This implies $K_S^2\le 4$. Thus, only the cases $K_S^2=2,3,4$
remains to be discharged. This is easy and  essentially is a
reproduction of the previous argument.

For instance, for case $K_S^2=2$ we have by Lemma \ref{Cotam} and
Proposition \ref{CotaGenus} that $g\le 16$ and $m\le 26$,
moreover, assuming $\Lambda$ is transversal, we can  apply Theorem
\ref{K^2=2p=3} and assume that $\chi (\mathcal{O}_S)\le 3$.
Evaluating Tan's Inequality at $e=5$ (Lemma \ref{TanIneq} iii)) we
get:

$$7K_S^2+6C.K_S\le m +9\chi (\mathcal{O}_S),$$
that, under our fixed values becomes $C.K_S\le 6$. Using Hodge
Index Theorem we obtain $m\le 18$. Evaluating again Tan's
Inequality at $e=4$ we have $C.K_S\le4$ and $m\le 8$. Finally, we
evaluate once again Tan's Inequality at $e=5$ and get the final
contradiction $C.K_S\le 3$ and $g+1\le m\le 4$.

Cases $K_S^2=3,4$ are quite analogous, only that in these cases we
don't need Theorem \ref{K^2=2p=3}, and in consequence the
transversality hypothesis can be avoided.
\end{proof}

\enddocument
\begin{thebibliography}{99}

\bibitem{bpv} Barth, W., Peters, C., Van de Ven, A. \emph{Compact
Complex Surfaces} Springer Verlag (1984).

\bibitem{beauv} Beauville, A.:
\emph{Le nombre minimum de fibers singulieres d'un courbe stable
sur $\mathbb{P}^1$}. Asterisque, 86, (1981), 97-108.

\bibitem{beauv2} Beauville A.:
\emph{Surfaces alg\'ebriques complexes.}  Asterisque 54, (1978).

\bibitem{bom} Bombieri, E. \emph {Canonical models of surfaces of general type}. Publ. Math. IHES, 42, (1973), 171-219.


\bibitem{cat} Catanese, F. \emph {Surfaces with $K2 = pg = 1$ and their period mapping}.
In Algebraic geometry (Proc. Summer Meeting, Univ. Copenhagen,
1978), Springer LNM, 732 (1979), 1-29.

\bibitem{deb} Debarre, O. \ {\em In\'egalit\'es num\'eriques pour les surfaces de type g\'en\'eral}. Bull. Soc. Math. France 110 (3), (1982), 319-346.

\bibitem{hart} Hartshorne, R. \emph {Algebraic Geometry}. Springer-Verlag GTM, (1997).

\bibitem{hor1} Horikawa, E.\emph {Algebraic surfaces of general type with small $c_1^2$. I }. Annals of Mathematics, 104, (1976), 357-387.

\bibitem{hor} Horikawa, E. \emph {Algebraic surfaces of general type with
small $c_1^2$ II}, Inventiones Math. 37  (1976), 121-155.

\bibitem{kun}  Kunev, V. Thesis for Master's Degree. Sofia University,
(1976).

\bibitem{liu} Liu, K. \emph{Remarks on the Geometry of Moduli Spaces} Proceedings of the AMS  124,
(3), (1996), 689-695.

\bibitem{tan} Tan S-L:
\emph{The minimal number of singular fibers of a semi stable curve
over $\Bbb P^1$.}  J. Algebraic Geom., 4, (1995), 591-596.

\bibitem{ttz} Tan S-L , Tu Y. ,  Zamora A. G.
\emph{On complex surfaces with $5$ or $6$ semistable singular
fibers over $\mathbb{P}^1$ }. Math. Zeitschrift 249 (2005),
427-438.

\bibitem{tod} Todorov, A. N. \emph{Surfaces of General type with
$p_g=1$ and $(K,K)=1.$ I} Annales scientifiques de l'E.N.S. 13
(1980), 1-21.

\bibitem{zam} Zamora, A. G. \emph {Genus $5$ general type $\Bbb P^1$-semistable curves have
at least $7$ singular points}. Note di Matematica  32 (2) (2012),
1-4.

\end{thebibliography}
